\documentclass[journal,twoside,web]{ieeecolor}

\usepackage{fontspec}

\usepackage{generic}
\usepackage{cite}
\usepackage{amsmath,amssymb,amsfonts}
\usepackage{graphicx}
\usepackage{algorithm,algorithmic}
\usepackage{hyperref}
\hypersetup{hidelinks=true}
\usepackage{textcomp}
\usepackage[utf8]{inputenc}
\usepackage{changepage}
\usepackage{graphics,graphicx}
\usepackage{mathrsfs} 
\usepackage{comment}
\usepackage{algorithm}
\usepackage{algorithmic}
\usepackage[linesnumbered,ruled,vlined,algo2e]{algorithm2e}
\usepackage{xcolor}
\usepackage{comment}
\usepackage{bm}
\usepackage{bbm}
\usepackage{enumerate}
\usepackage{commath}
\usepackage{subfigure}
\usepackage{cite}
\usepackage{booktabs}
\usepackage{empheq}
\usepackage{amssymb}

\usepackage{amsthm}
\usepackage{accents}
\usepackage{thmtools}
\usepackage{thm-restate}
\usepackage{float}
\usepackage[normalem]{ulem}
\usepackage{tikz-cd}
\usepackage{arydshln}

\theoremstyle{plain}
\newtheorem{corollary}{Corollary}

\newtheorem{definition}{Definition}
\newtheorem{theorem}{Theorem}

\newtheorem{proposition}{Proposition}
\newtheorem{problem}{Problem}
\newtheorem*{problem*}{Problem}
\newtheorem*{theorem*}{Theorem}
\newtheorem{assumption*}{Assumption}

\newtheorem{remark}{Remark}

\theoremstyle{definition}
\newtheorem{example}{Example}

\def\BibTeX{{\rm B\kern-.05em{\sc i\kern-.025em b}\kern-.08em
    T\kern-.1667em\lower.7ex\hbox{E}\kern-.125emX}}

\begin{document}
\title{Differential Geometric Conditions for Koopman Linearizability of Control-Affine Systems}
\author{Shankar A. Deka, \IEEEmembership{Member, IEEE}
\thanks{Shankar A. Deka is with Department of Electrical Engineering and Automation, School of Electrical Engineering,
        Aalto University, Maarintie 8, 02150 Espoo, Finland.
        {email: \tt\small shankar.deka@aalto.fi}}}

\maketitle

\begin{abstract}
Koopman linearization opens many possibilities for control synthesis and analysis of nonlinear systems. Whether or not any given nonlinear control system admits a finite-dimensional Koopman representation remains a crucial question to address. A related problem is to categorize the class of all Koopman linearizable nonlinear control systems. In this work, we present differential geometric conditions on the drift and control vector fields of a control-affine nonlinear system, that must be necessarily satisfied for Koopman linear transformation to exist. The same conditions are also shown to be sufficient for (a slightly weaker notion of) Koopman linearizability on control-invariant manifolds. Further, these conditions, together with an additional condition, become necessary and sufficient for Koopman linearizability to a controllable linear system. Our examples illustrate the ease of checking these conditions, and also shed light on how Koopman linearizing transformation may not exist for a control-affine system even though one can linearize the autonomous part of the system via Koopman lifting.  
\end{abstract}

\section{Introduction}
\IEEEPARstart{K}{oopman} representations of nonlinear dynamical systems greatly aid in their study, by allowing one to utilize a wide set of techniques that are otherwise suited only for linear systems. For example, they have been used for forecasting, reachability verification, and stability analysis \cite{azencot2020forecasting}\cite{ding2024time}\cite{deka2022koopman}. In the presence of control inputs, Koopman-based linearization has still been pursued with great success in nonlinear control synthesis problems, most notably in conjunction with linear MPC \cite{korda2018linear}. 

While a plethora of work demonstrates the indisputable utility of the Koopman framework in control and analysis of nonlinear systems, along with an equally prolific body of work on data-driven algorithms to approximate Koopman representations, a comparatively small number of works in the literature have focused on rigorously answering the questions surrounding the existence of finite-dimensional Koopman representations. Starting with autonomous systems, for example, \cite{mezic2021koopman} provides a necessary and a sufficient condition for linearizing injective maps. The existence of linearizing embeddings on compact attractors and invariant sets through an algebraic geometry and topology viewpoint is studied in \cite{kvalheim2026linearizability}. \cite{liu2025properties} investigates the existence of continuous one-to-one linear immersions for nonlinear systems with multiple omega-limit sets. Sufficient conditions for so-called super-linearization based on properties of the graph associated with the system's vector field jacobian, is presented in \cite{belabbas2023sufficient}. 

Extending these types of existence results on Koopman embedding to controlled nonlinear systems is a natural albeit challenging next step, as naive extensions from autonomous to controlled systems are severely restricted \cite{haseli2025two}. Several interesting works have emerged over the recent years towards Koopman representation for control. For instance, \cite{haseli2026modeling} studies discrete-time systems and the general form of Koopman linearization using the idea of common invariant subspaces. Koopman bilinear forms are considered in \cite{goswami2021bilinearization} and more recently, \cite{katayama2026global} provides sufficient conditions for their existence based on existence of an isomorphism between two Lie algebras. Another interesting extension of Koopman representation to controlled system was presented in \cite{iacob2024koopman}, wherein linear parameter varying forms are leveraged to handle the state-dependence of the input matrix. The authors in \cite{ko2024minimum} note that lifting to linear systems, whenever possible, can be performed in infinitely many ways, and thus focus their research on identifying the minimal number of observables needed to do so. Most recently, \cite{shang2026existence} has identified structural requirements (up to a similarity transformation) for discrete-time control systems along with an additional condition on Koopman linearizability of its autonomous subsystem, for the existence of Koopman embedding. In a complementary direction, considering the difficulty in finding an exact Koopman transformation that renders a controlled system linear, works such as \cite{strasser2026overview} have instead stressed on the importance of quantifying approximation errors arising from data-driven algorithms for Koopman linearization.

\subsection{Contributions}
Towards addressing the fundamental question surrounding the existence of Koopman linear embeddings for nonlinear control-affine systems, this work builds upon a rich history of geometric control theory \cite{Nijmeijer1990NonlinearSystems}, where we adopt/extend related results in differential geometry that enable us to tackle specific challenges that arise within the context of Koopman linearization. Our contributions are as follows:
\begin{itemize}
    \item We first present necessary conditions for the existence of diffeomorphic Koopman transformations to a linear control system, along with the domain over which such a transformation is valid. These results are directly based on the drift and the control vector fields, and take the form of Lie algebraic rank condition and commutativity condition. 
    \item We show that these conditions are also sufficient for the existence of linearizing diffeomorphic maps between control invariant sets. When transformations are limited to controllable linear systems, this restriction on the transformation domain is completely removed. That is, we get a stronger, ``if and only if" condition for Koopman linearizability.
    \item Consequently, our results lead to the categorization of the class of control-affine nonlinear systems that can and cannot be Koopman linearized, thereby imposing fundamental restrictions on learning algorithms that seek to perform Koopman linearization of controlled systems.
    \item We draw direct connections between our conditions and those for feedback linearizability, wherein, for a special case of a single-input system, satisfaction of the former trivially implies satisfaction of the latter. We present illuminating analytical examples that demonstrate that our results are readily applicable and straightforward to check, as they do not depend on the existence of additional abstract constructs.
\end{itemize} 
 
\subsection{Related work} Seeking coordinate transformations that linearize nonlinear dynamical systems dates back to the 1970s, starting with the work of Krener \cite{Krener1973}, who presented the equivalence of two nonlinear systems in terms of the existence of a linear map between two linear spaces constructed using Lie brackets of the system's drift and control vectors. A few years later, Brockett \cite{Brockett1978FeedbackSystems} proposed an additional transformation to the control input to linearize single-input systems into controllable linear systems (which is now widely known as feedback linearization). Additional considerations on the linearization domain were explored in \cite{Hunt1983GlobalSystems, Boothby1984SomeSystems}. By 1985, further extensions to multi-input systems were made in \cite{Respondek1985GeometricSystems, Cheng1985GlobalFeedback}. Another practical addition to this body of work was made by \cite{Cheng1988ExactOutputs} wherein linearization of the output was also considered together with the dynamics.

These historic works \cite{Krener1973}-\cite{Cheng1988ExactOutputs}, however, are fundamentally different from modern-day Koopman-based linearization in two ways. Firstly, they differ in terms of the dimensions of the original and transformed state space. In most cases, Koopman linearization allows embedding the original state space into a higher-dimensional space, which immediately breaks the controllability assumption. Controllability of the transformed linear system is key in static/feedback-linearization literature, but is not as important in Koopman linearization. A second, more obvious difference is that Koopman linearization leads to the control inputs being unchanged in the original and lifted system, i.e., only the states are transformed. This distinction can make the control synthesis and implementation easier, since the same input drives both the nonlinear and the transformed linear system. Moreover, foregoing input transformation allows one to restrict the search for the linearizing coordinate transformation (\textit{if it exists}) using eigenfunctions of the autonomous part of the nonlinear dynamics. The existence of such a linearizing transformation itself is the main question that we seek to answer in this paper. 

The contributions of our paper are organized into the following sections. We start with a background on Koopman Operator theory, essential definitions, problem formulation, and key preliminary results in Section \ref{sec:prel}. The main technical results are presented in Section \ref{sec:main}, where we derive the necessary conditions and sufficient conditions for Koopman linearizability of a nonlinear system. Section \ref{sec:controllab} discusses conditions for the preservation of controllability under Koopman transformations. Our technical results are illustrated with examples in Section \ref{sec:results}. Finally, we present some conclusions and discuss future directions in Section \ref{sec:conc}.\\

\textbf{Notation.} The space of k-times continuously differentiable functions from a set $X$ to $Y$ is given by $\mathcal{C}^k(X,Y)$. The jacobian of a (vector-valued) function $f$ w.r.t. variable ${*}$ is denoted by $\nabla_{*}f$. Given two vector fields $v,w:X\subset \mathbbm{R}^n \rightarrow \mathbbm{R}^n,$ the Lie bracket is denoted as $[v,w](x) = \nabla_x v(x)\cdot w(x) - \nabla_x w(x)\cdot v(x)$. The vector $\mathbbm{1}_i \in \mathbbm{R}^n$ denotes the standard basis vector with $i^{th}$ component equal to one.

\section{Preliminaries}\label{sec:prel}
We consider a continuous-time nonlinear control-affine system
\begin{equation}\label{eq:nonlinear}
\dot{x}=f(x)+\sum_{i=1}^m g_i(x) u_i
\end{equation}
where states $x\in X \subseteq \mathbbm{R}^n$ and $f, g_1, \ldots, g_m$ are smooth vector fields on $X$. The control input vector $u \doteq [u_1,\ldots,u_m]^\top \in 
\mathbbm{R}^m.$

\begin{definition}[Koopman linearizability]
    We shall refer to a control-affine nonlinear system \eqref{eq:nonlinear} as \textit{Koopman linearizable} if there exists a set $X_0  \subseteq X$, a diffeomorphism $\Phi \in \mathcal{C}^2(X_0,\mathbbm{R}^N)$ from $X_0$ onto its image with $N\ge n$, and matrices $A$ and $B$, such that 
    the transformed state $z(t) = \Phi(x(t))$ evolves according to a linear dynamics 
    \[
     \frac{d}{dt}z(t) = Az(t) + Bu(t),
    \]
    for any input $u(\cdot)$, initial state $x(0)\in X_0$, and  $t<\inf\left\{\tau >0 \mid x(\tau) \notin X_0\right\}$.
\end{definition}
With this definition of Koopman linearizable nonlinear systems, we are interested in answering the following question:

\begin{problem}
Given a nonlinear control-affine system of the form \eqref{eq:nonlinear}, find conditions on the drift and the control vector fields ($f(x)$ and $g_i(x)$) for which the system is Koopman linearizable.
\end{problem}

It is important to note here, that our problem involves finding conditions that can be checked \textit{directly}, using only the provided drift and control vector fields. Typically, Koopman linearizability is commonly assumed in `applied Koopmanism' literature and relies on the existence of a set of observables $\psi_1,\ldots,\psi_N$ such that 
\begin{enumerate}
    \item[(a)] span$\{\psi_i \mid i=1,\ldots,N\}$ is Koopman invariant with respect to the autonomous dynamics $\dot{x} = f(x)$, and
    \item[(b)] Each $g_i(x)$ is spanned by the columns of the pseudo-inverse of the Jacobian matrix $\nabla_x \Psi$, where $\Psi(x) = [\Psi_1(x),\ldots,\Psi_N(x)]^\top$.
\end{enumerate}
Clearly, these two conditions are indirect and not straightforward to check for a given dynamics \eqref{eq:nonlinear}. In fact, approaching the problem of Koopman linearizability for controlled systems by first finding a Koopman invariant subspace for the autonomous dynamics (by setting $u=0$) can be problematic since two different systems with the same autonomous dynamics can yield different outcomes (as we show in our Example \ref{ex:slow_manifold1} and Example \ref{ex:slow_manifold2}). Instead, we seek conditions that only rely on the vector fields $f$ and $g_i$'s to draw conclusions on Koopman linearizability. 

A key distinction between this notion of linearization and the more broadly known feedback linearization is that the former retains the same control inputs as the original nonlinear dynamics. Nevertheless, the same differential geometric tools (e.g. Lie brackets and algebra, rank of distributions, involutions, etc.) can be adapted to analyze Koopman linearizability of nonlinear systems. Towards that end, we present some essential notations, definitions, and preliminary results. In order to make the materials accessible to a broader controls community, we depart from some standard notions commonly found in mathematical textbooks in differential geometry (where, e.g., vector fields are often represented as linear maps from the space of smooth functions onto itself \cite{lee2003smooth}\cite{olver1993applications}), and develop our results in a more familiar notational setting.

\begin{definition}[Adjoint of a vector field]
For any two vector fields $f$ and $g$, we define repeated Lie bracket in terms of adjoint representation, $\operatorname{ad}_f^k g$, $k=0,1,2, \ldots$, inductively as $\operatorname{ad}_f^k g=\left[f, \operatorname{ad}_f^{k-1} g\right], k \geq 1$, with $\operatorname{ad}_f^0 g=g$.
\end{definition}

\begin{definition}[Involutive distribution] Given a set of $d$ smooth vector fields $v_1(x),\,v_2(x),\ldots,v_d(x)$ on an open set $U \subseteq \mathbbm{R}^n$, the map
\[
    \Delta(x) = \operatorname{span}\left\{v_1(x),\,v_2(x),\, \ldots, v_d(x) \right\}
\]
from any point $x\in U$ to a vector space is called a distribution. The distribution $\Delta(x)$ is called an involutive distribution if $[v_i(x),v_j(x)]\in \Delta(x)$ for each $1\le i,j \le d.$
\end{definition}
\begin{proposition}[Feedback linearization \cite{isadoriNonlinear}]\label{prop:FL}
Consider the control-affine nonlinear system \eqref{eq:nonlinear} with a single input (i.e. $m=1$) as follows:
    \begin{equation}\label{eq:single-input}
        \dot{x}=f(x) + g_1(x) u_1.
    \end{equation}
This system is feedback linearizable to a controllable system if and only if the following conditions hold:
\begin{enumerate}
    \item[(i)] The matrix $\left\{g_1(x_0) \mid \operatorname{ad}_f g_1(x_0) \mid \ldots \mid \operatorname{ad}_f^{n-1} g_1(x_0) \right\}$ has rank $n$, and
    \item[(ii)] $\operatorname{span}\left\{g_1(x),\,\operatorname{ad}_f g_1(x),\, \ldots, \operatorname{ad}_f^{n-2} g_1(x) \right\}$ is an involutive distribution in a neighborhood of $x_0$.
\end{enumerate}
\end{proposition}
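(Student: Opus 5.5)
We prove the classical Jakubczyk--Respondek / Hunt--Su result: the single-input system is feedback linearizable to a controllable linear system near $x_0$ if and only if conditions (i) and (ii) of Proposition~\ref{prop:FL} hold. The strategy is to reduce feedback linearizability to the existence of a scalar ``linearizing output'' $h$ with relative degree $n$ at $x_0$. Frobenius' theorem then turns the existence of such an $h$ into the involutivity condition (ii). Condition (i) supplies the needed transversality.

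\emph{Necessity.} Suppose a local diffeomorphism $z=T(x)$ and feedback $u=\alpha(x)+\beta(x)v$ with $\beta(x_0)\neq 0$ bring the system to Brunovsky form $\dot z_i=z_{i+1}$ for $i<n$ and $\dot z_n=v$. Set $h=T_1$. Then $L_{g_1}L_f^{k}h=0$ for $k\le n-2$ and $L_{g_1}L_f^{n-1}h\neq 0$, so $h$ has relative degree $n$. By the standard identity
\[
L_{\operatorname{ad}_f^{j}g_1}L_f^{k}h=\sum_{i=0}^{j}(-1)^i\binom{j}{i}L_f^{j-i}L_{g_1}L_f^{k+i}h,
\]
applied inductively, $dh\cdot \operatorname{ad}_f^{j}g_1=0$ for $j\le n-2$ and $dh\cdot\operatorname{ad}_f^{n-1}g_1\neq 0$. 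More generally, the matrix $\big(dL_f^{k}h\cdot \operatorname{ad}_f^{j}g_1\big)_{k,j}$ is anti-triangular with nonzero anti-diagonal at $x_0$. This gives rank $n$, i.e.\ condition (i). Since the distribution $\Delta=\operatorname{span}\{g_1,\dots,\operatorname{ad}_f^{n-2}g_1\}$ then has constant rank $n-1$ near $x_0$ and is annihilated by the nonzero exact form $dh$, it is the kernel of $dh$. Any such kernel is involutive, because $L_{[v,w]}h=L_vL_wh-L_wL_vh=0$. This gives condition (ii).

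\emph{Sufficiency.} Condition (i) implies that $\Delta$ has constant rank $n-1$ on a neighborhood of $x_0$. Together with (ii), Frobenius' theorem gives a smooth $h$ with $dh(x_0)\neq0$ and $dh\cdot\operatorname{ad}_f^{j}g_1=0$ for $j\le n-2$. Since the full family spans $\mathbbm{R}^n$, $dh\cdot\operatorname{ad}_f^{n-1}g_1(x_0)\neq 0$. Running the same bracket identity in reverse yields $L_{g_1}L_f^{k}h=0$ for $k\le n-2$ and $L_{g_1}L_f^{n-1}h(x_0)\neq 0$.

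The anti-triangular structure then shows that $dh,dL_fh,\dots,dL_f^{n-1}h$ are linearly independent at $x_0$. Hence $T(x)=(h,L_fh,\dots,L_f^{n-1}h)$ is a local diffeomorphism. In these coordinates $\dot z_i=z_{i+1}$ for $i<n$ and $\dot z_n=L_f^nh+L_{g_1}L_f^{n-1}h\,u$. The feedback $u=(v-L_f^nh)/L_{g_1}L_f^{n-1}h$ gives the controllable Brunovsky chain.

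\emph{Main obstacle.} The delicate step is the inductive equivalence between the vanishing of $L_{g_1}L_f^kh$ and the vanishing of $dh\cdot\operatorname{ad}_f^jg_1$. It requires a careful induction on $j+k$ using the Jacobi-type identity $L_{[f,w]}=L_fL_w-L_wL_f$. Once that lemma is in place, both directions follow as above.
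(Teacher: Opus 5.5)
The paper gives no proof of this proposition; it is quoted from Isidori. Your argument (reduce to an output $h$ of relative degree $n$, then apply Frobenius to the corank-one distribution $\operatorname{span}\{g_1,\dots,\operatorname{ad}_f^{n-2}g_1\}$) is essentially the standard proof from that source, and it is correct, apart from one sign convention to adjust: with the paper's bracket $[v,w]=\nabla_x v\cdot w-\nabla_x w\cdot v$ one has $L_{[f,w]}=L_wL_f-L_fL_w$, so your binomial identity picks up a factor $(-1)^j$, which leaves every vanishing/nonvanishing conclusion, and hence the anti-triangular argument, unchanged.
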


We will return to this proposition later in Section \ref{sec:controllab} when we compare the necessary and sufficient conditions for Controllable Koopman linearizability to feedback linearization. Next, we present another key result in differential geometry, which is often referred to as the straightening theorem or flow-box theorem, generalized to $n-$dimensions, and is closely related to the well-known Frobenius theorem \cite{Nijmeijer1990NonlinearSystems}\cite{lee2003smooth}\cite{olver1993applications}.

\begin{proposition}[Flow-box theorem generalization]\label{prop:box}
    Consider a set of $n'\le n$ independent vector fields $v_1(x),\,v_2(x),\ldots,v_m(x)$ on an open set $U \subseteq \mathbbm{R}^n$, and let $[v_i(x),v_j(x)]=0$ for all $0\le i,j \le n'$. Then, there exists a coordinate transformation $\Phi: U \rightarrow \mathbbm{R}^n$ such that
    \begin{equation}\label{eq:rectify}
        \nabla_x\Phi(x)\cdot v_i(x) = \mathbbm{1}_i
    \end{equation}
for all $x\in U.$
\end{proposition}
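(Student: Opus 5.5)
The plan is to build $\Phi$ from the flows of the commuting fields $v_1,\ldots,v_{n'}$. Fix $x_0\in U$. Since the vectors $v_1(x_0),\ldots,v_{n'}(x_0)$ are linearly independent, we can pick constant vectors $w_{n'+1},\ldots,w_n$ that complete them to a basis of $\mathbbm{R}^n$. Let $\phi^{i}_t$ denote the local flow of $v_i$, and define
\[
\Psi(t_1,\ldots,t_n) = \phi^{1}_{t_1}\circ\cdots\circ\phi^{n'}_{t_{n'}}\Big(x_0 + \textstyle\sum_{j>n'} t_j w_j\Big)
\]
for $t$ in a small neighborhood of $0\in\mathbbm{R}^n$. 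The candidate coordinate transformation is $\Phi=\Psi^{-1}$. It is defined on the image of $\Psi$, which is an open neighborhood of $x_0$, and we shrink $U$ to this neighborhood where necessary. As in the classical statement, the conclusion is local.

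First we check that $\Psi$ is a local diffeomorphism at $0$. At $t=0$ we have $\partial\Psi/\partial t_i = v_i(x_0)$ for $i\le n'$, because every other flow is evaluated at time zero. For $j>n'$ we have $\partial\Psi/\partial t_j = w_j$. These columns form a basis by construction, so the inverse function theorem gives a smooth inverse $\Phi$ on a neighborhood of $x_0$.

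The key step is to show that $\partial\Psi/\partial t_i(t) = v_i(\Psi(t))$ for every $t$ near $0$, not only at $t=0$. This is where the hypothesis $[v_i,v_j]=0$ enters, through the standard fact that vanishing Lie brackets imply commuting flows: $\phi^{i}_s\circ\phi^{j}_t=\phi^{j}_t\circ\phi^{i}_s$ wherever both sides are defined. I would prove this by the usual route. Set $W(t)=(\nabla\phi^{i}_{-t})\, v_j(\phi^{i}_t(x))$. Differentiating gives $\tfrac{d}{dt}W(t) = (\nabla\phi^{i}_{-t})[v_i,v_j](\phi^{i}_t(x))$ up to sign, which is zero. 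Hence $v_j$ is invariant under pushforward by $\phi^{i}_t$, and the pushforward of a vector field generates the conjugated flow, so the flows commute. With commutation in hand, for any $i\le n'$ we can move $\phi^{i}_{t_i}$ to the front of the composition. Differentiating in $t_i$ then yields $v_i$ evaluated at the image point, that is, $\partial_{t_i}\Psi = v_i\circ\Psi$.

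Finally, differentiating $\Phi(\Psi(t))=t$ gives $\nabla_x\Phi(\Psi(t))\cdot\partial_{t_i}\Psi(t)=\mathbbm{1}_i$. Combined with the previous step this is exactly $\nabla_x\Phi(x)\,v_i(x)=\mathbbm{1}_i$ for all $x$ in the neighborhood, which is \eqref{eq:rectify}.

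I expect the main obstacle to be the commuting-flows lemma, together with the bookkeeping about domains: flows are only locally defined, and the result holds only on a neighborhood of each point. If a statement on all of $U$ is really intended, one needs extra assumptions such as completeness and simple connectivity. Without them I would read the proposition as local, holding on some neighborhood of every point of $U$.
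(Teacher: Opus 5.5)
Your proof is correct. It is the standard argument, the canonical form for commuting vector fields as in Lee's book: compose the flows starting from a transverse slice, use the fact that commuting fields have commuting flows to get $\partial_{t_i}\Psi=v_i\circ\Psi$, and invert. The paper states this proposition without proof and only cites the textbook literature, so it relies on exactly this argument.

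Your local reading is the right one, not just caution. The literal claim on all of $U$ is false: take $n'=1$, $U$ an annulus about the origin in $\mathbbm{R}^2$ and $v_1=(-x_2,x_1)^\top$, and then $\Phi_1$ would have to increase by $2\pi$ around each closed orbit. (For $n'<n$, even completeness plus simple connectivity would not be enough.) The local version is all that Theorem~\ref{th:Linear_lower} needs after shrinking $\tilde{V}$, and your $\Phi$ already satisfies $\Phi(x_0)=0$.
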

Figure \ref{fig:frobenius} illustrates how this transformation $\Phi$ can be applied to rectify nonlinear vector fields. Note that if a transformation $\Phi$ rectifies the vector fields $v_i(x)$ (i.e., satisfies equation \eqref{eq:rectify}), then any transformation $\tilde{\Phi}(x) \doteq \Phi(x)-\Phi(x')$ for any arbitrary point $x'\in X$, also rectifies the vector fields $v_i(x)$. This means $\tilde{\Phi}(x') = 0.$ Thus, without loss of generality, we can set the root of the rectifying transformation $\Phi$ to be equal to any point $x'$ as per our convenience.  We are now ready to prove the main results in the next section.

\begin{figure}[h]
    \centering
    \includegraphics[width=0.9\linewidth, trim=0 20 0 5, clip]{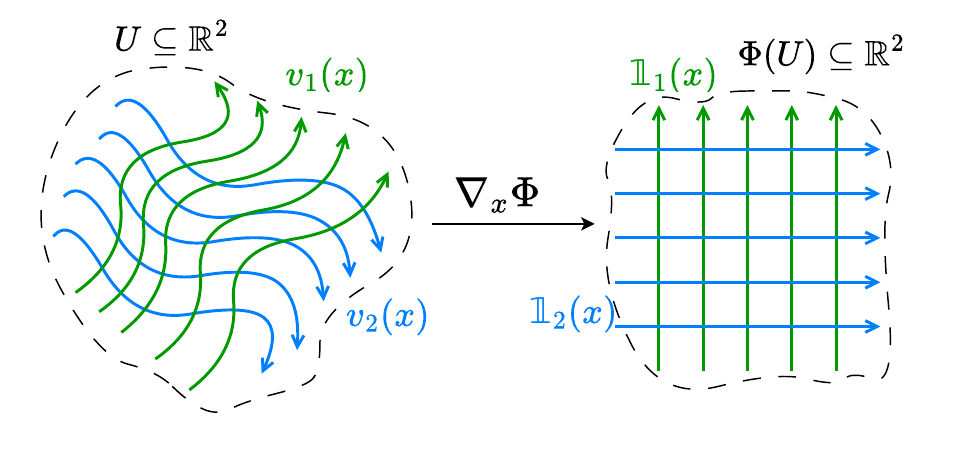}
    \caption{As per Proposition \ref{prop:box}, for independent, commuting 2-d vector fields $v_1(x)$ and $v_2(x)$ (i.e., $[v_1(x), v_2(x)] = 0$), the map $\nabla_x\Phi$ ``straightens" the vector fields on the domain $U.$ The green lines correspond to the integral curves of the vector field $v_1(x)$ and $\mathbbm{1}_1(x)$, while the blue lines illustrate the integral curves for $v_2(x)$ and $\mathbbm{1}_2(x).$}
    \label{fig:frobenius}
\end{figure}

\section{Main results}\label{sec:main}
We first show that diffeomorphic transformations, including lifting transformations, preserve Lie brackets between vector fields. This is stated more concretely as follows.

\begin{proposition}[Coordinate invariance]\label{prop:lie_invariance}
    Let $\Phi \in \mathcal{C}^2(X,Z)$ be a diffeomorphism from $X$ onto its image $Z$. Then for any vector fields $v_1(x),v_2(x) \in \mathcal{C}^1(X,\mathbbm{R}^n)$, let $\tilde{v}_1(z) \doteq \big(\nabla_x \Phi(x) \cdot v_1(x)\big)\lvert_{x=\Phi^{-1}(z)} $ and $\tilde{v}_2(z) \doteq \big(\nabla_x \Phi(x) \cdot v_2(x)\big)\lvert_{x=\Phi^{-1}(z)}$ be two vector fields on $Z$. Then, we have
    \begin{equation}
        \nabla_x\Phi(x) \cdot [v_1(x),\,v_2(x)] = [\tilde{v}_1(z),\, \tilde{v}_2(z)].
    \end{equation}
\end{proposition}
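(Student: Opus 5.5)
The plan is a direct computation in coordinates: differentiate the pushed-forward vector fields with the chain rule and watch the second-derivative terms of $\Phi$ cancel by symmetry. Write $J(x) \doteq \nabla_x\Phi(x)$, which is an $N\times n$ matrix. Since $\Phi$ is a diffeomorphism onto its image $Z$ (an embedded $n$-dimensional submanifold when $N>n$), $J(x)$ has full column rank.

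One care point comes first. When $N>n$, the fields $\tilde v_i$ are only defined on $Z$, so the Jacobian $\nabla_z \tilde v_i$ in the bracket must be read correctly. I will interpret $\nabla_z\tilde v_i(z)\cdot w$, for $w$ tangent to $Z$, as the directional derivative of $\tilde v_i$ along a curve in $Z$ with velocity $w$. Equivalently, I can take any $\mathcal{C}^1$ extension of $\tilde v_i$ to a neighborhood of $Z$. Only directional derivatives along $\tilde v_j(z)\in T_zZ$ appear in the bracket, so the result does not depend on the extension chosen.

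The core step is then the chain rule. Take a curve $x(s)$ with $x(0)=x$ and $\dot x(0)=v_2(x)$. Its image $z(s)=\Phi(x(s))$ has velocity $\tilde v_2(z)$ at $s=0$. Differentiating $\tilde v_1(z(s)) = J(x(s))\,v_1(x(s))$ at $s=0$ gives
\[
\nabla_z\tilde v_1(z)\cdot\tilde v_2(z) = D^2\Phi(x)[v_2(x),v_1(x)] + J(x)\,\nabla_x v_1(x)\cdot v_2(x),
\]
where $D^2\Phi(x)[a,b]$ is the vector with components $a^\top \nabla^2\Phi_k(x)\, b$. Swapping the roles of $v_1$ and $v_2$ gives the analogous identity. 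Subtracting the two, in the order dictated by the paper's bracket convention $[v,w]=\nabla v\cdot w-\nabla w\cdot v$, makes the second-order terms cancel. This is because $\Phi\in\mathcal{C}^2$, so each Hessian $\nabla^2\Phi_k$ is symmetric by Schwarz's theorem. What remains is
\[
[\tilde v_1,\tilde v_2](z) = J(x)\bigl(\nabla_x v_1\cdot v_2 - \nabla_x v_2\cdot v_1\bigr) = \nabla_x\Phi(x)\cdot[v_1(x),v_2(x)],
\]
evaluated at $x=\Phi^{-1}(z)$, which is the claim.

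I expect the main obstacle to be the well-definedness issue from the second paragraph, not the computation itself. The computation is routine once the Hessian symmetry is noted, and $\mathcal{C}^2$ regularity of $\Phi$ is exactly what both that symmetry and the chain rule require. The extension to a neighborhood of $Z$ can be made using the local immersion structure of $\Phi$, and the curve-based argument above already shows that the choice of extension is irrelevant.
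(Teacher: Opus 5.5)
Your proof is correct and is essentially the paper's argument: apply the chain rule and product rule to $\nabla_x\Phi\cdot v_1$ along $v_2$ (and vice versa), then cancel the second-derivative terms using symmetry of the mixed partials of $\Phi$. Your curve-based reading of $\nabla_z\tilde v_i$ on the image $Z$ is also a useful tightening of the step where the paper writes $\nabla_z\Phi^{-1}(z)\cdot\nabla_x\Phi(x)=I$; that identity only makes sense through some extension of $\Phi^{-1}$ when $N>n$.
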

\begin{proof}
    We start by expanding the Lie bracket $[\tilde{v}_1(z),\,\tilde{v}_2(z)] =  \nabla_z \tilde{v}_1(z) \cdot \tilde{v}_2(z) - \nabla_z \tilde{v}_2(z)\cdot \tilde{v}_1(z)$. We note that by the chain rule, $\nabla_z \tilde{v}_1(z) = \nabla_x(\nabla_x \Phi(x) \cdot v_1(x)) \cdot \nabla_z \Phi^{-1}(z)$. Further, we note that for any $x\in X$
    \begin{gather*}
        \Phi^{-1}\left(\Phi(x)\right) = x \implies \\
         \nabla_x \left[ \Phi^{-1}\left(\Phi(x)\right) \right] = \left[\nabla_z\Phi^{-1}(z)\right]_{z=\Phi^{-1}(x)} \cdot \nabla_x\Phi(x) = I_{n\times n}
    \end{gather*}
    Thus, 
    \begin{eqnarray*}
      &&\nabla_z \tilde{v}_1(z)\cdot \tilde{v}_2(z)\\
      &=& \nabla_x(\nabla_x \Phi(x) \cdot v_1(x))\cdot \underbrace{\nabla_z\Phi^{-1}(z) \cdot \nabla_x\Phi(x)}_{= I_{n\times n}} \cdot v_2(x)  \\ 
      &=& \nabla_x(\nabla_x \Phi(x) \cdot v_1(x)) v_2(x).
    \end{eqnarray*} 
    Now, by the product rule, this is equal to 
\begin{equation*}
    \begin{split}
        &\Big(\nabla_x \Phi(x) \cdot \nabla_x v_1(x)  + \sum_i v_1^i(x)\frac{\partial}{\partial x_i}\nabla_x\Phi(x)\Big)\cdot v_2(x)\\ 
        = \quad &\nabla_x \Phi(x) \cdot \nabla_x v_1(x)\cdot v_2(x) \quad + \\
        &\quad\quad\Big(\sum_i v_1^i(x)\frac{\partial}{\partial x_i}\nabla_x\Phi(x)\Big)\cdot v_2(x)\\
        = \quad &\nabla_x \Phi(x) \cdot \nabla_x v_1(x)\cdot v_2(x) \quad +\\
        &\quad\quad\Big(\sum_i v_1^i(x) \sum_j v_2^j(x)\frac{\partial^2 \Phi(x)}{\partial x_i \partial x_j}\Big)\\
         = \quad &\nabla_x \Phi(x) \cdot \nabla_x v_1(x)\cdot v_2(x) \quad +\\
         & \quad\quad\Big(\sum_i \sum_j v_1^i(x)  v_2^j(x)\frac{\partial^2 \Phi(x)}{\partial x_i \partial x_j}\Big)
    \end{split}  
\end{equation*}
In the same way, we get
\begin{align*}
    \nabla_z \tilde{v}_2(z)\cdot \tilde{v}_1(z) &= \nabla_x \Phi(x) \cdot \nabla_x v_2(x)\cdot v_1(x)\\
    &+ \Big(\sum_i \sum_j v_2^i(x)  v_1^j(x)\frac{\partial^2 \Phi(x)}{\partial x_i \partial x_j}\Big).
\end{align*}
    Note that the second term of $\nabla_z \tilde{v}_1(z)\cdot \tilde{v}_2(z)$ and $\nabla_z \tilde{v}_2(z)\cdot \tilde{v}_1(z)$ are the same. Thus, by subtracting $\nabla_z \tilde{v}_1(z)\cdot \tilde{v}_2(z)$ and $\nabla_z \tilde{v}_2(z)\cdot \tilde{v}_1(z)$, we finally have
    \begin{align*}
        [\tilde{v}_1(z),\,\tilde{v}_2(z)] &=  \nabla_x \Phi(x) \cdot \nabla_x v_1(x)\cdot v_2(x)\\
        &- \nabla_x \Phi(x) \cdot \nabla_x v_2(x)\cdot v_1(x)\\
        &= \nabla_x\Phi(x)\cdot [v_1(x), v_2(x)].
    \end{align*}
    This completes our proof. \qed
\end{proof}

We point out that the proposition \ref{prop:lie_invariance} can in general, be applied to coordinate transformations that involve lifting to a higher-dimensional space (i.e., $Z$ can have a dimension $N$ greater than $n$). This key extension allows us to utilize existing differential geometric machinery more broadly towards Koopman-based linearization, which often involves lifting. The commutative diagram shown in Figure \ref{fig:commutative-diagram-Phi} summarizes this idea.


\begin{figure}[t]
\begin{tikzcd}[column sep=2.2em]
    |[draw, thick, rounded corners, fill=blue!5]|{\hspace{-0.2em}\mathcal{C}^1(X,\mathbb{R}^n)\hspace{-0.2em}}
    && |[draw, thick, rounded corners, fill=green!5]|{\hspace{-0.2em}\mathcal{C}^1(X,\mathbb{R}^N)\hspace{-0.2em}}
    && |[draw, thick, rounded corners, fill=red!5]|{\hspace{-0.2em}\mathcal{C}^1(Z,\mathbb{R}^N)\hspace{-0.2em}} \\
    \\
    |[draw, thick, rounded corners, fill=blue!5]|{\hspace{-0.2em}\mathcal{C}^1(X,\mathbb{R}^n)\hspace{-0.2em}}
    && |[draw, thick, rounded corners, fill=green!5]|{\hspace{-0.2em}\mathcal{C}^1(X,\mathbb{R}^N)\hspace{-0.2em}}
    && |[draw, thick, rounded corners, fill=red!5]|{\hspace{-0.2em}\mathcal{C}^1(Z,\mathbb{R}^N)\hspace{-0.2em}}
   	\arrow["{\nabla_x\Phi}", from=1-1, to=1-3]
	\arrow["{[\cdot\,,\,\cdot](x)}"', from=1-1, to=3-1]
	\arrow["{x=\Phi^{-1}(z)}", from=1-3, to=1-5]
	\arrow["{[\cdot\, ,\, \cdot](z)}", from=1-5, to=3-5]
	\arrow["{\nabla_x\Phi}"', from=3-1, to=3-3]
	\arrow["{x=\Phi^{-1}(z)}"', from=3-3, to=3-5]
\end{tikzcd}
    \caption{Commutative diagram describing the transformation of $\mathcal{C}^1$-functions and their brackets under the Koopman lifting $z = \Phi(x)$. Under Proposition \ref{prop:lie_invariance}, the Lie brackets are conserved under coordinate change, i.e., the transformed Lie bracket between two vector fields is the same as the Lie bracket of the transformed vector fields.}
    \label{fig:commutative-diagram-Phi}
\end{figure}

\begin{example}
    Consider two vector fields on $\mathbbm{R}^2$, given by $v_1(x) = \left[\begin{matrix}x_1\\x_2^2 \end{matrix}\right]$ and $v_2(x) = \left[\begin{matrix}x_1^2\\x_2 \end{matrix}\right]$ , with $[v_1,v_2]=\left[\begin{matrix}x_{1}^{2}\\x_{1} \left(1 - 2 x_{2}\right)\end{matrix}\right]$ and a diffeomorphic lifting map $\Phi$:$$\left[\begin{matrix}x_1\\x_2 \end{matrix}\right] \overset{\Phi} \mapsto \left[\begin{matrix}z_1\\z_2\\z_3\end{matrix}\right] = \left[\begin{matrix}\tanh{\left(x_{1} + x_{2} \right)}\\x_{1}^{2}\\\sin{\left(x_{1} \right)}\end{matrix}\right],$$
with an inverse 
$$
\left[\begin{matrix}z_1\\z_2\\z_3\end{matrix}\right] \overset{\Phi^{-1}} \mapsto \left[\begin{matrix}x_1\\x_2 \end{matrix}\right] = \left[\begin{matrix}\frac{z_3\sqrt{z_2}}{\sin{(\sqrt{z_2})}} \\ \operatorname{atanh}\left({z_1 - \frac{z_3\sqrt{z_2}}{\sin{(\sqrt{z_2})}}}\right) \end{matrix}\right].
$$
The Lie bracket transformation $\nabla_x\Phi \cdot [v_1,\,v_2]$ is equal to
\begin{equation}\label{eq:ex1}
\left[\begin{matrix} \left(x_{1} + x_{1}^2 - 2 x_1 x_{2}\right) \left(1 - \tanh^{2}{\left(x_{1} + x_{2} \right)}\right)\\2 x_{1}^{3}\\x_{1}^{2} \cos{\left(x_{1} \right)}\end{matrix}\right].
\end{equation}
Now, in $\left\{(x_1,x_2)\mid x_2\ge 0\right\}$, the transformed vector fields in lifted $z-$coordinates are given by 
\[
    \tilde{v}_1(z) = \left[\begin{matrix} \left(1 - z_{1}^{2}\right) \left(\sqrt{z_{2}} +  \left(- \sqrt{z_{2}} + \operatorname{atanh}{\left(z_{1} \right)}\right)^{2}\right)\\2 z_{2}\\\sqrt{z_{2}} \cos{\left(\sqrt{z_{2}} \right)}\end{matrix}\right]
\] and
\[
    \tilde{v}_2(z) = \left[\begin{matrix}\left(1 - z_{1}^{2}\right)\left(\sqrt{z_{2}} + z_2\right)  \\2 z_{2}^{\frac{3}{2}}\\z_{2} \cos{\left(\sqrt{z_{2}} \right)}\end{matrix}\right].
\]
Thus, the Lie bracket $[\tilde{v}_1(z),\, \tilde{v}_2(z)]$ is equal to 
\begin{gather*}\small
\left[\begin{matrix} (\sqrt{z_2} - z_1^2\sqrt{z_2})(1 - 2\operatorname{atanh}{\left(z_{1} \right)}) +  3 z_{2} - 3z_1^2 z_2\\2 z_{2}^{\frac{3}{2}}\\z_{2} \cos{\left(\sqrt{z_{2}} \right)}\end{matrix}\right]\\
= \left[\begin{matrix} \left(\sqrt{z_2}(1 - 2\operatorname{atanh}{\left(z_{1} \right)}) +  3 z_{2}\right) (1 - z_1^2)\\  2 z_{2}^{\frac{3}{2}}  \\ z_{2} \cos{\left(\sqrt{z_{2}} \right)}\end{matrix}\right]
\end{gather*}
Substituting $z_1=\tanh{\left(x_{1} + x_{2} \right)}$ and $z_2 = x_1^2$ in the above leads to the expression in equation \eqref{eq:ex1}. The same can be verified for the set $\left\{(x_1,x_2)\mid x_2\le 0\right\}$.
\end{example}

\begin{theorem} [Necessity for Koopman linearization]\label{th:Neccesary}
Consider the nonlinear system \eqref{eq:nonlinear} with $f\left(x_0\right)=0$. If the system \eqref{eq:nonlinear} is Koopman linearizable, then the following two conditions hold:
\begin{enumerate}
    \item[(i)] $\operatorname{dim}\left(\operatorname{span}\left\{\operatorname{ad}_f^j g_i(x), 1\le i \le m,\, j\ge 0\right\}\right)$ is constant for all $x \in \tilde{V}$, and
    \item[(ii)] $\left[\operatorname{ad}_f^k g_i, \operatorname{ad}_f^l g_j\right](x)=0, \quad$ for all $1\le i, j \le m$ and $  k, l \ge 0, \quad \forall x \in \tilde{V}$,
\end{enumerate}
where $\tilde{V}$ is an open neighborhood of $x_0.$
\end{theorem}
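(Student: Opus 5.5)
The plan is to transport everything to the lifted coordinates and use Proposition \ref{prop:lie_invariance}. Suppose $\Phi$ is the Koopman linearizing diffeomorphism on $X_0$ with $z=\Phi(x)$ obeying $\dot z = Az+Bu$. For the neighborhood in the statement, I take $\tilde{V}$ to be an open neighborhood of $x_0$ contained in $X_0$; this presumes $x_0$ is an interior point of $X_0$, which is where the theorem is meant to apply. Differentiating $z(t)=\Phi(x(t))$ along trajectories of \eqref{eq:nonlinear} gives
\[
\nabla_x\Phi(x)\Big(f(x)+\sum_i g_i(x)u_i\Big) = A\Phi(x) + Bu .
\]
This holds for every $x\in\tilde V$ (take $t\to 0^+$ from any initial state) and every constant $u\in\mathbbm{R}^m$. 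Taking $u=0$ and then $u=\mathbbm{1}_i$ separates the terms:
\[
\nabla_x\Phi\cdot f = A\Phi, \qquad \nabla_x\Phi\cdot g_i = b_i,
\]
where $b_i$ is the $i$-th column of $B$. In the notation of Proposition \ref{prop:lie_invariance}, this means $\tilde f(z)=Az$ and $\tilde g_i(z)=b_i$ on $Z=\Phi(\tilde V)$.

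Next I compute brackets in $z$-coordinates. For a constant vector field $c$ and the linear field $Az$, the convention $[v,w]=\nabla v\cdot w-\nabla w\cdot v$ gives $[Az,c]=Ac$. By induction, $\operatorname{ad}_{\tilde f}^k\tilde g_i = A^k b_i$, which is again constant. Applying Proposition \ref{prop:lie_invariance} repeatedly, with induction on $k$ and using that $\operatorname{ad}_f^{k-1}g_i$ is pushed forward to $A^{k-1}b_i$, I obtain
\[
\nabla_x\Phi(x)\cdot \operatorname{ad}_f^k g_i(x) = A^k b_i \quad \text{for all } x\in\tilde V .
\]
Two points need care here. First, $\operatorname{ad}^k_f g_i$ is only $\mathcal{C}^\infty$ on $X$, so smoothness is fine. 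Second, Proposition \ref{prop:lie_invariance} needs $\Phi\in\mathcal{C}^2$, which the definition provides.

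Condition (ii) then follows from one more application of Proposition \ref{prop:lie_invariance}:
\[
\nabla_x\Phi\cdot[\operatorname{ad}_f^k g_i,\operatorname{ad}_f^l g_j] = [A^k b_i, A^l b_j] = 0,
\]
because the bracket of two constant fields vanishes. Since $\Phi$ is a diffeomorphism onto its image, $\nabla_x\Phi(x)$ is injective for each $x$, so the bracket itself vanishes on $\tilde V$. For condition (i), injectivity of $\nabla_x\Phi(x)$ gives
\[
\dim\operatorname{span}\{\operatorname{ad}_f^j g_i(x)\} = \dim\operatorname{span}\{A^j b_i\} = \operatorname{rank}[B\ AB\ \cdots\ A^{N-1}B],
\]
which does not depend on $x$.

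The step I expect to be the main obstacle is justifying that $\nabla_x\Phi(x)$ has full column rank $n$, i.e.\ that $\Phi$ is an immersion. ``Diffeomorphism onto its image'' in $\mathbbm{R}^N$ with $N\ge n$ has to be read as a smooth embedding whose inverse is differentiable on the image, and this is also what the proof of Proposition \ref{prop:lie_invariance} uses when it writes $\nabla\Phi^{-1}\nabla\Phi=I$. I will state this reading explicitly and invoke it. A secondary point is that the hypothesis $f(x_0)=0$ is not actually needed beyond choosing the base point of $\tilde V$; it is consistent with $\Phi(x_0)$ lying in $\ker A$ after the normalization discussed following Proposition \ref{prop:box}.
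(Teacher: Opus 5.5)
Your proposal is correct and follows the paper's proof. You push $f$ and $g_i$ forward to $Az$ and $b_i$, then apply Proposition \ref{prop:lie_invariance} inductively to get $\nabla_x\Phi\cdot\operatorname{ad}_f^k g_i = A^k b_i$; since brackets of constant fields vanish, injectivity of $\nabla_x\Phi(x)$ pulls back both the commutativity condition and the constant dimension (your assumptions that $x_0$ is interior to $X_0$ and that $\Phi$ is an immersion are the ones the paper uses tacitly, and your $\operatorname{rank}[B\ AB\ \cdots\ A^{N-1}B]$ is just a sharper form of \eqref{eq:rank}).
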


\begin{proof}
   Suppose $\Phi: X \to Z$
    is a (diffeomorphic) Koopman transformation, such that the transformed state $z=\Phi(x)$ has linear dynamics 
    \begin{equation}\label{eq:th_lin_dyn}
        \dot{z} = \nabla_x \Phi(x) \cdot(f(x)+\sum_{i=1}^m g_i(x) u_i) = Az + B u.
    \end{equation}
    In other words, $\nabla_x \Phi(x)\cdot f(x) = Az$ and $\nabla_x \Phi(x) \cdot \operatorname{ad}_f^0 g_i(x) = \nabla_x \Phi(x)\cdot g_i(x) = b_i$, where $b_i$ is the $i^{th}$ column of the matrix $B.$ By proposition \ref{prop:lie_invariance}, we have $ \nabla_x \Phi(x) \cdot \operatorname{ad}_f g_i(x) = \nabla_x \Phi(x) \cdot[f(x),\,g_i(x)] = [Az,\,b_i] = Ab_i.$ Utilizing proposition \ref{prop:lie_invariance} again yields
     $ \nabla_x \Phi(x) \cdot \operatorname{ad}_f^2 g_i(x) = \nabla_x \Phi(x) \cdot[f(x),\, \operatorname{ad}_f g_i(x)] = [Az,\,Ab_i] =  A^2b_i,$ and more generally, we have 
     \begin{eqnarray*}
         \nabla_x \Phi(x) \cdot \operatorname{ad}_f^k g_i(x) &=& \nabla_x \Phi(x) \cdot[f(x),\, \operatorname{ad}_f^{k-1} g_i(x)]\\
         &=&  A^kb_i
     \end{eqnarray*}
     for all non-negative integers $k$. Now, given that $\nabla_x \Phi(x) \cdot \operatorname{ad}_f^k g_i(x) = A^kb_i$ and $\nabla_x \Phi(x) \cdot \operatorname{ad}_f^l g_j(x) = A^lb_j$, applying proposition \ref{prop:lie_invariance} one final time gives $\nabla_x \Phi(x) \cdot [\operatorname{ad}_f^k g_i(x),\, \operatorname{ad}_f^l g_j(x)] = [A^kb_i,\, A^lb_j] = 0,$ for all non-negative integers $k,l$ and for all $i,j \in \{1,\ldots,m\}.$ This means $[\operatorname{ad}_f^k g_i(x),\, \operatorname{ad}_f^l g_j(x)] = 0,$ for all non-negative integers $k,l$ and for all $i,j \in \{1,\ldots,m\}$ since $\nabla_x\Phi(x)$ is full column rank on $\tilde{V}$.
     
     Next, to show condition (i) must hold, we again note that $ \nabla_x \Phi(x) \cdot \operatorname{ad}_f^j g_i(x) =  A^jb_i$ on the set $\tilde{V}.$ Thus, \begin{eqnarray}\label{eq:rank}
         &&\operatorname{span}\left\{A^{j}b_i \mid 1\le i \le m,\, j\ge 0 \right\} \\
         = &&\operatorname{span}\left\{\nabla_x \Phi(x) \cdot\operatorname{ad}_f^j g_i(x) \mid 1\le i \le m,\, j\ge 0 \right\} \nonumber \\
         = &&\nabla_x \Phi(x) \cdot \operatorname{span}\left\{\operatorname{ad}_f^k g_i(x) \mid 1\le i \le m,\, j\ge 0 \right\}. \nonumber
     \end{eqnarray} 
     Now, since $\Phi$ is a diffeomorphism, $\nabla_x\Phi$ is non-singular on $\tilde{V}$ and thus, $ \operatorname{dim}\left(\operatorname{span}\left\{\operatorname{ad}_f^j g_i(x)\right\}\right) = \operatorname{dim}\left(\nabla_x \Phi(x) \cdot \operatorname{span}\left\{\operatorname{ad}_f^j g_i(x)\right\}\right)$ for $j\ge 0$ and $i=1,\ldots,m$, which is a constant no greater than $n$, due to equation \eqref{eq:rank}. This concludes our proof.
    \qed
\end{proof}

The same conditions that appear in Theorem \ref{th:Neccesary}, can also be used to establish a special case of Koopman linearization (in the sense that one can show the existence of a Koopman transformation that projects the original states to a lower-dimensional manifold). This slightly weakened result is mainly due to a technicality in the flow-box theorem (Proposition \ref{prop:box}), wherein the coordinate transformation $\Phi$ exists between two spaces of the \textit{same dimension}.

\begin{figure*}[t]
    \centering
\includegraphics[width=0.9\textwidth,trim=5 5 5 5,clip]{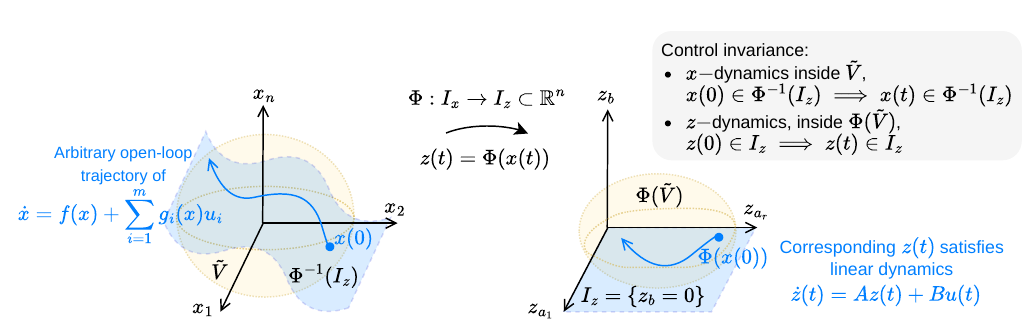}
    \caption{Illustration of Theorem \ref{th:Linear_lower}, which presents sufficient conditions for Koopman linearizability on a manifold $I_x\doteq \Phi^{-1}(I_z)$, shown in blue color, that is control invariant inside an open set $\tilde{V}$, shown in buff color. In transformed coordinates, this manifold is mapped to the $I_z$ plane, which is control invariant inside the open set $\Phi(\tilde{V})$. The dimension of the manifolds $I_x$ and $I_z$ is equal to the dimension of the distribution in condition (i) of the theorem.}
    \label{fig:Theorem2}
\end{figure*}

\begin{theorem}[Sufficiency for Koopman linearization]\label{th:Linear_lower}
Consider again the nonlinear system \eqref{eq:nonlinear} with $f\left(x_0\right)=0$. Suppose that in a neighborhood $\tilde{V}$ of $x_0$, we have
\begin{enumerate}
    \item[(i)] $\operatorname{dim}\left(\operatorname{span}\left\{\operatorname{ad}_f^j g_i(x), 1\le i \le m,\, j\ge 0 \right\}\right)$ is constant for all $x \in \tilde{V}$, and
    \item[(ii)] $\left[\operatorname{ad}_f^k g_i, \operatorname{ad}_f^l g_j\right](x) = 0,$ for all $1\le i, j \le m$ and $k, l \ge 0, \quad \forall x \in \tilde{V}$.
\end{enumerate}
Then, there exists a manifold $I_x \subset X$ and a coordinate transformation $\Phi: I_x \rightarrow \mathbbm{R}^n$ such that the system \eqref{eq:nonlinear} is Koopman linearizable.  
\end{theorem}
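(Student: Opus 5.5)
Let $d$ denote the constant dimension in condition (i), and let $\mathcal{L}(x)=\operatorname{span}\{\operatorname{ad}_f^j g_i(x)\}$. The plan is to pick $d$ vector fields from this family that are independent near $x_0$, straighten them with the flow-box theorem, and then check that in the resulting coordinates the drift restricted to a suitable $d$-dimensional integral manifold is linear and the inputs are constant.

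\textbf{Step 1: choose a basis.} Near $x_0$, choose $d$ fields $v_1,\dots,v_d$ of the form $\operatorname{ad}_f^{k}g_i$ that are linearly independent at $x_0$, shrinking $\tilde V$ so they stay independent. By condition (i), every $\operatorname{ad}_f^j g_i$ can then be written as $\sum_r c_r(x) v_r$ on $\tilde V$.

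\textbf{Step 2: show the coefficients are constant.} Condition (ii) gives $[v_s, \operatorname{ad}_f^j g_i]=0$. Expanding,
\[
0=\sum_r (\nabla_x c_r\cdot v_s)\, v_r + \sum_r c_r [v_s,v_r] = \sum_r (\nabla_x c_r\cdot v_s)\, v_r .
\]
By independence of the $v_r$, each $c_r$ is constant along every $v_s$. So each $c_r$ is constant on the integral manifolds of $\mathcal{L}$. This distribution is involutive by (ii) and has constant rank by (i), so Frobenius gives these integral manifolds. Let $I_x$ be the leaf through $x_0$. On $I_x$ the $c_r$ are genuine constants.

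\textbf{Step 3: straighten and identify $A$, $B$.} Apply Proposition \ref{prop:box} to the commuting fields $v_1,\dots,v_d$ on $I_x$, a $d$-dimensional manifold on which they form a frame. This gives a chart $\Phi$ with $\nabla\Phi\cdot v_r=\mathbbm{1}_r$ and, using the remark after Proposition \ref{prop:box}, $\Phi(x_0)=0$. Then $\nabla\Phi\cdot g_i=b_i$ is constant because the $c_r$ are constant. Since the basis is closed under $\operatorname{ad}_f$, we have $[f,v_r]=\sum_s a_{sr}v_s$ with constant $a_{sr}$ by Step 2. Let $\tilde f=\nabla\Phi\cdot f$. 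Proposition \ref{prop:lie_invariance} gives $[\tilde f,\mathbbm{1}_r]=-\partial_{z_r}\tilde f = A\mathbbm{1}_r$ up to sign convention. Hence $\tilde f$ is affine in $z$, and $\tilde f(0)=0$ because $f(x_0)=0$. So $\tilde f=Az$, and the dynamics on $I_x$ are $\dot z=Az+Bu$.

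\textbf{Main obstacle: invariance of $I_x$.} We need trajectories starting in $I_x$ to stay in $I_x$, which requires $f$ to be tangent to the leaf. The intended argument is that $[f,\mathcal{L}]\subset\mathcal{L}$, so the flow of $f$ permutes the leaves, and $f(x_0)=0$ forces the leaf through $x_0$ to be preserved. I would make this rigorous by showing that the flow $\phi_t^f$ pushes $\mathcal{L}$ to itself. Since $x_0$ is fixed by the flow, $\phi_t^f(I_x)\subset I_x$ locally, until exit from $\tilde V$. This local invariance is exactly what the weakened notion of linearizability requires.

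There is a dimension mismatch: the chart maps into $\mathbbm{R}^d$ with $d\le n$, not into $\mathbbm{R}^n$ as the statement says. I would state this explicitly in the proof, since it is why the result is "on a manifold".
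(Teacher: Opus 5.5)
Your proof is correct and uses the same ingredients as the paper: commuting generators straightened by Proposition~\ref{prop:box}, condition (ii) to make the coefficients of $g_i$ and $[f,v_r]$ constant along leaves, and $f(x_0)=0$ to select the leaf through $x_0$. The difference is the order. You restrict to the Frobenius leaf first and straighten on it, whereas the paper straightens once on the full $n$-dimensional neighborhood $\tilde V$ and only then restricts.

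The ambient chart makes your ``main obstacle'' a one-line observation. In that chart, $M$ becomes $\operatorname{span}\{\mathbbm{1}_1,\dots,\mathbbm{1}_r\}$ and the leaf through $x_0$ is $\{z_b=0\}$. Since $[f,v_i]\in M$, the $z_b$-block of $\tilde f$ does not depend on $z_a$; the $\tilde g_i$ have zero $z_b$-block; and $\tilde f_b(0)=0$. So $z_b\equiv 0$ is invariant for every input, with no argument about the flow of $f$ permuting leaves. This flat-chart computation is also the cleanest way to make your flow argument rigorous.

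The ambient chart has two further advantages. It uses Proposition~\ref{prop:box} exactly as stated, on an open subset of $\mathbb{R}^n$. It also removes your dimension mismatch: the restricted chart maps $I_x$ into $\{z_b=0\}\subset\mathbb{R}^n$, as the statement and the requirement $N\ge n$ ask. With your intrinsic chart you would simply pad with $n-d$ zero coordinates.

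In return, your leaf-first route gives genuinely constant $a_{sr}$ and $b_i$ directly. In the ambient chart, condition (ii) only gives $\tilde f=C(z_b)z_a+h(z_b)$ and $\tilde g_i=\tilde g_{ai}(z_b)$, so the linear part is constant only on $\{z_b=0\}$. The paper's display writes a constant $A$ in the full chart; this is harmless for the conclusion but not justified off $I_z$.

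One ordering fix is needed. Step 3 uses $\nabla\Phi\cdot f$ and Proposition~\ref{prop:lie_invariance} on $I_x$, which only make sense once $f$ is known to be tangent to $I_x$. So your invariance argument must come before Step 3, together with the remark that the $g_i\in\mathcal{L}$ are tangent to $I_x$.
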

\begin{proof}
    Since the distribution $M \doteq \operatorname{span}\left\{\operatorname{ad}_f^j g_i(x), \,1\le i \le m,\, j\ge 0 \right\}$ has some constant dimension, $r$, for all $x \in \tilde{V}$, we can pick $r$ independent vectors $v_1(x),\ldots,v_r(x) \in M$ such that $[v_i,v_j] = 0.$ Thus, due to Proposition \ref{prop:box}, there exists a diffeomorphism $\Phi(x)$ defined on $\tilde{V}$ such that $\nabla_x\Phi(x)\cdot v_i(x) = \mathbbm{1}_i$ for all $i=1,\cdots,r$. Without loss of generality, we can take $\Phi(x_0)=0$, since proposition \ref{prop:box} 
    pertains only to the gradient of $\Phi$. For notational convenience, we partition the transformed coordinates as 
    $$\Phi(x) = z = \left[ \begin{array}{c}
         z_a  \\
         \hdashline[4pt/2pt]
         z_b 
    \end{array}\right], \text{ with } z_a\in \mathbbm{R}^r, \,z_b\in \mathbbm{R}^{n-r}. $$
 Note that for any $w(x) \in M$, we have $w(x) = \sum_{i=1}^r \alpha_i(x)v_i(x)$ for some scalar functions $\alpha_i.$ This means $\nabla_x\Phi(x)\cdot w(x) = [\alpha_1(x), \ldots,\alpha_r(x), 0, \ldots, 0]^\top$ which means $\tilde{w}(z)\doteq \nabla_x\Phi(x)\cdot w(x)\lvert_{x=\Phi^{-1}(z)}$ has its first $r$ elements non-zero and the remaining $n-r$ elements equal to zero. Additionally, since $\nabla_x\Phi(x)\cdot[w(x),v_i(x)] = 0$ for each $i=1,\ldots,r$, we have $[\tilde{w}(z), e_i] = \frac{\partial \tilde{w}}{\partial z_i} =0$. Thus, $\tilde{w}(z)$ is of the form $\left[\begin{array}{c}
         \tilde{w}_a(z_b)  \\
         0_{(n-r)\times 1} 
    \end{array}\right]$. This in particular means that $\tilde{g}_i(z)$ and $[\tilde{f}(z),\mathbbm{1}_i]$ have the last $(n-r)$ rows zero, since $g_i(x)$ and $[f(x),v_i(x)]$ belong to the set $M.$ We can then write $\tilde{g}_i(z)$ as
    \[
        \tilde{g}_i(z) = \left[\begin{array}{c}
             \tilde{g}_{ai}(z_b)  \\
             0_{(n-r)\times 1} 
        \end{array}\right],
    \]
    for some function $\tilde{g}_{ai}:\mathbbm{R}^{n-r}\rightarrow\mathbbm{R}^{r}.$
    Now, since $[[f(x),v_i(x)],v_j(x)]=0$, we have $[[\tilde{f}(z),\mathbbm{1}_i],\mathbbm{1}_j]=0$ which leads to $\frac{\partial^2 \tilde{f}(z)}{\partial z_i \partial z_j} =0$ for $i,j=1,\ldots,r.$ Thus, for some matrix $A$ and functions $\tilde{f}_a:\mathbbm{R}^{n-r}\rightarrow\mathbbm{R}^r,\,\tilde{f}_b:\mathbbm{R}^{n-r}\rightarrow\mathbbm{R}^{n-r}$, we have 
    \[
        \tilde{f}(z) = \left[ \begin{array}{cc}
            A_{11} & A_{12} \\
            A_{21} & A_{22}
        \end{array}\right]z + \left[\begin{array}{c}
             \tilde{f}_a(z_b)  \\
             \tilde{f}_b(z_b) 
        \end{array}\right]
    \]
    Since $\Phi(x_0)=0$ and $f(x_0)=0,$ we have $\tilde{f}(0)=\nabla_x\Phi(x_0)\cdot f(x_0) = 0$, and thus $\tilde{f}_a(0)=0$ and $\tilde{f}_b(0)=0.$ Next we show that $A_{21}=0.$ As $[f(x),v_i(x)] \in M$ for $i=1,\ldots,r$, the vector $[\tilde{f}(z), \mathbbm{1}_i] = \frac{\partial \tilde{f}(z)}{\partial z_i}$ has the last $(n-r)$ rows equal to zero for $i=1,\ldots,r$. This means the $i^{th}$ column of matrix $A_{21}$ is zero. Since this is true for all $i=1,\ldots,r$, we have $A_{21}=0.$ This finally leads us to
    \begin{eqnarray}\label{eq:z_dyn}
        &\dot{z}& = \nabla_x\Phi(x)\cdot f(x) + \sum_{i=1}^m \nabla_x\Phi(x)\cdot g_i(x)u_i \nonumber\\
        &=& \tilde{f}(z) + \sum_{i=1}^m \tilde{g}_i(x)u_i \\ 
        &=& \left[ \begin{array}{cc} 
            A_{11} & A_{12} \\
            0 & A_{22}
        \end{array}\right]z + \left[\begin{array}{c}
             \tilde{f}_a(z_b)  \\
             \tilde{f}_b(z_b) 
        \end{array}\right] + \sum_{i=1}^m \left[\begin{array}{c}
             \tilde{g}_{ai}(z_b)  \\
             0 
        \end{array}\right]u_i. \nonumber
    \end{eqnarray}
Notice that for the set $I_z = \{ z\,|\, z_b=0 \},$ we have $z_b(t) \equiv 0$ along the dynamics \eqref{eq:z_dyn}, if $z(0)\in I_z.$ Thus, everywhere on the manifold $I_x \doteq \Phi^{-1}(I_z) \cap \tilde{V}$, we can apply the transformation $\Phi: x \mapsto z$ such that
\begin{equation}
    \frac{d}{dt} \Phi(x(t)) = A \Phi(x(t)) + B u(t),   
\end{equation}
with $A = \left[ \begin{array}{cc} 
            A_{11} & A_{12} \\
            0 & A_{22}
        \end{array}\right]$ and $B = \left[\begin{array}{c}
             B_1  \\
             0 
        \end{array}\right]$ with $B1 \doteq 
             \big[\tilde{g}_{a1}(0), \ldots,\tilde{g}_{am}(0)\big]$. 
\qed
\end{proof}

\begin{remark}
Notice that Theorem \ref{th:Linear_lower} says that under the same conditions of Theorem \ref{th:Neccesary}, one can map the original state space $X$ restricted to a manifold $I_x$, to state space $Z$ wherein the dynamics becomes linear. The set $I_x$ is non-empty, since $\Phi(x_0) = 0 \in I_z$ and $x_0 \in \tilde{V}$ which means $x_0 \in I_x$. This linear dynamics is, however, uncontrollable, with $z_b$ as the uncontrollable modes. Figure \ref{fig:Theorem2} summarizes this restricted Koopman linearizability result. We discuss controllability-preserving Koopman transformations in the next section.
\end{remark}

\section{Controllability of Koopman linearization}\label{sec:controllab}
We now start with a controllable nonlinear system, and are interested in the question: under what conditions can the system be Koopman linearized into a controllable linear system? Given a controllable nonlinear control-affine system, let us say that there exists a Koopman lifting transformation $x \mapsto \Phi(x) \doteq z$ from $\mathbb{R}^n$ to $\mathbb{R}^N$ with $N > n$ and $\Phi(0) = 0$, such that we obtain the linear dynamics
\[
\dot{z} = Az + Bu.
\]
Clearly, this linear system in the lifted state space $z$ is not controllable, since for any controller $u(\cdot)$ and initial condition $z(0)=0$, the states $z(t)$ are confined to the $n$-dimensional manifold $Z = \left\{ \Phi(x) \,|\, x \in X \subseteq \mathbb{R}^n \right\}$ embedded in $\mathbb{R}^N$. In other words, not all states in $\mathbb{R}^N$ can be reached from the origin. 

In order to extract the controllable subspace, one can rewrite the system $(A,B)$ using Kalman decomposition into controllable canonical form. We can choose an appropriate coordinate transformation $\tilde{z} = Mz$ for some invertible $N \times N$ matrix $M$ such that
\begin{equation}\renewcommand{\arraystretch}{1.5}
    \dot{\tilde{z}} = \underbrace{
    \begin{bmatrix}\begin{array}{c:c}\tilde{A}_{11} & \tilde{A}_{12} \\ 
\hdashline[4pt/4pt]
0 & \tilde{A}_{22} \end{array}\end{bmatrix}
    }_{= MAM^{-1}}\tilde{z} + \underbrace{
    \begin{bmatrix}\begin{array}{c}\tilde{B}_{1} \\ \hdashline[4pt/2pt] 0 \end{array}\end{bmatrix}
    }_{= MB}u. 
\end{equation}
Additionally, recall this decomposition partitions the states $\tilde{z} = \begin{bmatrix}\begin{array}{c}\tilde{z}_{c} \\ \hdashline[4pt/2pt] \tilde{z}_{u}\end{array}\end{bmatrix}$ into $N_c$ controllable states and $N_u$ uncontrollable states, $\tilde{z}_{c}$ and $\tilde{z}_{u}$, respectively, and the pair $(\tilde{A}_{11}, \tilde{B}_1)$ is controllable. For the special case when $N=n$, we have the following necessary and sufficient conditions on Koopman linearizability to a controllable linear system. These conditions follow directly from the Theorems \ref{th:Neccesary} and \ref{th:Linear_lower} and their proofs, and thus are presented as a corollary:

\begin{corollary}[Controllable Koopman linearization]\label{cor:Controllable} Given any nonlinear system \eqref{eq:nonlinear} with $f(x_0)=0$, there exists a coordinate transformation $\Phi$ that transforms the system into a controllable linear system $(A,B)$ on some set $\tilde{V}$ containing $x_0$ if and only if the two conditions (i) and (ii) of Theorem \ref{th:Linear_lower} hold on $\tilde{V}$, with the constant dimension in condition (i) equal to $n$.
\begin{enumerate}
    \item[(i)] $\operatorname{dim}\left(\operatorname{span}\left\{\operatorname{ad}_f^j g_i(x), 1\le i \le m,\, 0\le j \le n-1 \right\}\right)$ is constant and equal to $n$ for all $x \in \tilde{V}$, and
    \item[(ii)] $\left[\operatorname{ad}_f^k g_i, \operatorname{ad}_f^l g_j\right](x) = 0,$ for all $1\le i, j \le m$ and $k, l \ge 0, \quad \forall x \in \tilde{V}$.
\end{enumerate}
\end{corollary}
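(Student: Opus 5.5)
We prove the two directions separately, reusing the arguments behind Theorem \ref{th:Neccesary} and Theorem \ref{th:Linear_lower} with $N=n$.

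\emph{Necessity.} Suppose $\Phi$ is a diffeomorphism from $\tilde{V}$ onto an open subset of $\mathbbm{R}^n$ with $\nabla_x\Phi\cdot(f+\sum_i g_iu_i)=Az+Bu$, and suppose $(A,B)$ is controllable. The proof of Theorem \ref{th:Neccesary} applies unchanged. It gives $\nabla_x\Phi(x)\cdot\operatorname{ad}_f^j g_i(x)=A^jb_i$ for every $x\in\tilde{V}$, and by Proposition \ref{prop:lie_invariance} every bracket $[\operatorname{ad}_f^kg_i,\operatorname{ad}_f^lg_j]$ is mapped to $[A^kb_i,A^lb_j]=0$. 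Here $\nabla_x\Phi(x)$ is a square invertible matrix, so condition (ii) follows. For condition (i), invertibility of $\nabla_x\Phi(x)$ gives
\[
\operatorname{dim}\operatorname{span}\{\operatorname{ad}_f^jg_i(x): 0\le j\le n-1\}=\operatorname{rank}[B\mid AB\mid\cdots\mid A^{n-1}B]=n .
\]
The second equality is Kalman's controllability criterion. We may truncate at $j\le n-1$ because of Cayley--Hamilton.

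\emph{Sufficiency.} Assume (i) and (ii) hold with dimension $n$. By Cayley--Hamilton-free reasoning, the span over $j\le n-1$ already has dimension $n=\dim\mathbbm{R}^n$, so the full distribution $M$ from Theorem \ref{th:Linear_lower} has $r=n$. From the spanning family, pick $n$ fields $v_1,\dots,v_n$ of the form $\operatorname{ad}_f^jg_i$ that are independent at $x_0$. After shrinking $\tilde{V}$, they stay independent, and they commute by (ii). Proposition \ref{prop:box} then gives a diffeomorphism $\Phi$ with $\nabla_x\Phi\cdot v_k=\mathbbm{1}_k$ and $\Phi(x_0)=0$.

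We now repeat the proof of Theorem \ref{th:Linear_lower} with $r=n$, so the $z_b$ block is empty. Each $\tilde{g}_i$ has zero derivative in every $z_k$, hence is a constant vector $b_i$. Next, $[\tilde f,\mathbbm{1}_k]$ is the image of $[f,v_k]=-\operatorname{ad}_{v_k}f$. This is not obviously of the form $\operatorname{ad}_f^{j}g_i$ with a $\pm$ sign. It is one of the spanning fields up to sign, namely $-\operatorname{ad}_f^{j+1}g_i$ when $v_k=\operatorname{ad}_f^jg_i$, so it commutes with every $v_l$. Therefore $\partial^2\tilde f/\partial z_k\partial z_l=0$, and $\tilde f$ is affine. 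Since $\tilde f(0)=0$, we get $\tilde f=Az$. The leftover terms $\tilde f_a,\tilde f_b$ and the manifold restriction from Theorem \ref{th:Linear_lower} disappear because they depend only on $z_b$. The result is $\dot z=Az+Bu$ on all of $\Phi(\tilde{V})$.

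Finally, $\nabla_x\Phi\cdot\operatorname{ad}_f^jg_i=A^jb_i$ again by Proposition \ref{prop:lie_invariance}. Condition (i) then forces $\operatorname{rank}[B\mid\cdots\mid A^{n-1}B]=n$, so $(A,B)$ is controllable.

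The step most in need of care is the claim that $[f,v_k]$ lies in the commuting family. This holds because the $v_k$ are chosen directly as iterated brackets $\operatorname{ad}_f^jg_i$, not as arbitrary combinations of them. It is what lets us avoid the $\alpha_i(x)$ coefficients that appeared in the proof of Theorem \ref{th:Linear_lower}. The same choice guarantees $\tilde g_i$ is constant, since $g_i$ itself belongs to the commuting family. A secondary point is that the definition of Koopman linearizability only asks for the dynamics to hold until trajectories exit $\tilde{V}$, so the local nature of the flow-box diffeomorphism is acceptable.
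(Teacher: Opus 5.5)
Your proof is correct and follows essentially the paper's route: necessity reruns the argument of Theorem~\ref{th:Neccesary} (with $\nabla_x\Phi(x)$ now square and invertible) and adds the Kalman rank test, while sufficiency repeats the flow-box construction of Theorem~\ref{th:Linear_lower} with $r=n$, so that $z_b$ is empty, $\tilde g_i$ is constant and $\tilde f$ is linear. Your refinements are well placed: choosing the rectified fields $v_k$ directly among the generators $\operatorname{ad}_f^j g_i$ is what makes $[g_i,v_k]=0$ and $[[f,v_k],v_l]=0$ follow from (ii) (the paper instead takes arbitrary commuting fields in $M$ and uses $[w,v_k]=0$ for every $w\in M$, which fails for non-constant coefficients), and your closing check that the resulting $(A,B)$ is controllable supplies a step the paper leaves implicit; the only slip is that $[f,v_k]=\operatorname{ad}_f^{j+1}g_i$ holds with no minus sign, which does not affect the argument.
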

\begin{proof} 
    ($\impliedby$) We first notice that the condition (i) above implies that condition (i) of Theorem \ref{th:Linear_lower} holds, because having additional vectors $\operatorname{ad}_f^j g_i(x)$ with $j\ge n$ in the distribution does not increase the dimension beyond $n$. Also, the condition (ii) above is the same as condition (ii) of Theorem \ref{th:Linear_lower}. Thus, our proof follows steps similar to the proof of Theorem \ref{th:Linear_lower}. Specifically, since $r=n$, the transformed state vector $z$ is entirely equal to $z_a$, without any $z_b$ elements. Consequently, equation \eqref{eq:z_dyn} will not have any $\tilde{f}_a,\tilde{f}_b$ terms, and the functions $\tilde{g}_{ai}$ are constants, and the Koopman linear transformation holds over the entire set $\tilde{V}$ instead of on the invariant set $I_x.$ We skip rewriting the detailed steps due to space constraints.\\ 
    
    \noindent ($\implies$) This part of the proof follows from the proof of Theorem \ref{th:Neccesary}. Firstly, as shown in the proof of Theorem \ref{th:Neccesary}, $A^{k}b_i = \nabla_x \Phi(x) \cdot\operatorname{ad}_f^k g_i(x)$ for $k\ge 0$ and $i=1,\ldots,m$. If the Koopman linearized system $(A,B)$ obtained from a transformation $\Phi$ is controllable, the rank of the controllability matrix $[B,\,AB, \cdots, \, A^{n-1}B]$ is equal to $n.$ Note that, due to the Cayley-Hamilton theorem, the power of $A$ beyond $n-1$ does not change the rank. In other words, $\operatorname{span}\left\{A^{k}b_i\right\} = \operatorname{span}\left\{\nabla_x \Phi(x) \cdot\operatorname{ad}_f^k g_i(x)\right\} = \nabla_x \Phi(x) \cdot \operatorname{span}\left\{\operatorname{ad}_f^k g_i(x)\right\}$ with $k\ge 0$, and $i=1,\ldots,m$, is $n-$dimensional. Now, since $\Phi$ is a diffeomorphism, $\nabla_x\Phi$ is non-singular and thus, $ \operatorname{dim}\left(\operatorname{span}\left\{\operatorname{ad}_f^k g_i(x)\right\}\right) = \operatorname{dim}\left(\nabla_x \Phi(x) \cdot \operatorname{span}\left\{\operatorname{ad}_f^k g_i(x)\right\}\right)$ with $k\ge 0$ and $i=1,\ldots,m$, is equal to $n$. This completes our proof.\qed
\end{proof}

The relation between Controllable Koopman linearizability and feedback linearizability is straightforward. Clearly, the former is a special case of feedback linearization, wherein one simply applies identity transformation to the control input. In fact, for single-input systems, the necessary and sufficient conditions for feedback linearization are trivially satisfied if the conditions of Corollary \ref{cor:Controllable} hold. Indeed, for $m=1$, the two conditions of Corollary \ref{cor:Controllable} are:
\begin{eqnarray}
    \operatorname{dim}\left(\operatorname{span}\left\{ \operatorname{ad}_f^0 g_1(x), \ldots, \operatorname{ad}_f^{n-1} g_1(x)\right\}\right)= n-1, \label{eq:dim}\\
    \text{and }\left[\operatorname{ad}_f^k g_1, \operatorname{ad}_f^l g_1\right](x) = 0 \quad \forall k, l \ge 0, \; \forall x \in \tilde{V}. \label{eq:comm}
\end{eqnarray}
Equation \eqref{eq:dim} is the same as condition (i) of Proposition \ref{prop:FL}. Condition (ii) of Proposition \ref{prop:FL} is also satisfied due to equation \eqref{eq:comm}, because for any two elements $v_1(x),v_2(x) \in \Delta(x) \doteq \operatorname{span}\left\{g_1(x),\,\operatorname{ad}_f g_1(x),\, \ldots, \operatorname{ad}_f^{n-2} g_1(x) \right\}$, we have $[v1,v2](x) = 0 \in \Delta(x).$

\section{Examples}\label{sec:results}
\begin{example}\label{ex:slow_manifold1} Consider the following system:
    $$\left[\begin{matrix}\dot{x}_{1}\\ \dot{x}_2\end{matrix}\right] = \left[\begin{matrix}x_{1}\\- x_{1}^{2} + x_{2}\end{matrix}\right] + \left[\begin{matrix}0\\1\end{matrix}\right]u.$$

This system is known to be Koopman linearizable using the transformation $z = \Phi\left(x\right) =  [x_1,x_2,x_1^2]^\top$, leading to the linear system
$$ \dot{z} = \left[\begin{matrix}1 & 0 & 0\\0 & 1 & -1\\0 & 0 & 2\end{matrix}\right]z + \left[\begin{matrix}0 \\1 \\ 0\end{matrix}\right]u.$$ This means that the necessary conditions of Theorem \ref{th:Neccesary} must hold. Indeed, for this system, we have $$\left\{g, \operatorname{ad}_f^1(g), \operatorname{ad}_f^2(g), \ldots\right\} = \left\{\left[ \begin{array}{c}
     0  \\
     1 
\end{array}\right], \left[ \begin{array}{c}
     0  \\
     1 
\end{array}\right], \left[ \begin{array}{c}
     0  \\
     0 
\end{array}\right],\ldots \right\},$$ which is of constant rank 1 everywhere, and $\left[\operatorname{ad}_f^k g, \operatorname{ad}_f^l g\right](x)=0, \quad$ for all $ k, l \ge 0$ everywhere on $\mathbbm{R}^2$. Conversely, since these two conditions hold, we can apply Theorem \ref{th:Linear_lower} which guarantees the existence of an invariant manifold $I_x$ and a corresponding linearizing transformation defined on the invariant manifold. Specifically for this example, $I_x = \left\{x \mid x_1 = 0\right\}$ and $z(t) = \left[\begin{array}{c}
     x_2(t)  \\
     x_1(t) 
\end{array}\right]$ evolves according to a linear dynamics of the form
\[
    \dot{z} = \left[\begin{array}{cc}
        1 & * \\
        0 & *
    \end{array}\right]z + \left[\begin{array}{c}
         1  \\
         0 
    \end{array}\right]u, 
\]
on the invariant manifold $I_z = \{z \mid z_2 = 0\}.$ The matrix elements indicated by $*$ do not influence the dynamics on the manifold $I_z$, and can be taken to be any number.
\end{example}
 
\begin{example}\label{ex:slow_manifold2}
    We next consider a system very similar to the previous example, but a flipped control input:
     $$\left[\begin{matrix}\dot{x}_{1}\\ \dot{x}_2\end{matrix}\right] = \left[\begin{matrix}x_{1}\\- x_{1}^{2} + x_{2}\end{matrix}\right] + \left[\begin{matrix}1\\0\end{matrix}\right]u.$$ Applying Theorem \ref{th:Neccesary}, one can conclude that this system is not Koopman linearizable around any neighborhood of the origin. This is because 
     \begin{gather*}
         \left\{g, \operatorname{ad}_f^1(g), \operatorname{ad}_f^2(g), \ldots\right\} \\ =
         \left\{\left[ \begin{array}{c}
     1  \\
     0 
\end{array}\right], \left[ \begin{array}{c}
     1  \\
     -2x_1 
\end{array}\right], \left[ \begin{array}{c}
     -1  \\
     2x_1 
\end{array}\right],\ldots \right\},
 \end{gather*}
 has a rank of $1$ in the set $\left\{(x_1,x_2)\mid x_1=0\right\}$ that contains the origin, but has rank $2$ everywhere else. This violates the condition (i) in Theorem \ref{th:Neccesary}. In fact, the condition (ii) is also violated since $[\operatorname{ad}_f^0(g),\operatorname{ad}_f^1(g)] = \left[\begin{array}{c}
     0  \\
     -2 
\end{array}\right]\neq \mathbf{0}.$ One can also verify by first principles that this system cannot be Koopman linearized, by noticing that for any candidate transformation $z = \Phi(x)$, we firstly need $\nabla_x \Phi\cdot \left[ \begin{array}{c}
     1  \\
     0 
\end{array}\right] = \frac{\partial}{\partial x_1} \Phi(x)$ to be a constant vector (say, $\gamma$). This means two things. First, $x_1$ appears as a linear term in $\Phi(x)$ and second, $\frac{\partial^2}{\partial x_2 x_1} \Phi(x) = 0$, i.e., $\frac{\partial}{\partial x_2} \Phi(x)$ is a vector independent of $x_1$, which we denote as $\lambda(x_2)$. This $\lambda(x_2)$ must be nonzero; otherwise, $\Phi(x)$ would not be a diffeomorphism since it would have to be independent of $x_2$. Thus, with $u \equiv 0$,
\begin{eqnarray*}
    \dot{\Phi}(x) &=& \frac{\partial \Phi}{\partial x_1} \dot{x}_1 + \frac{\partial \Phi}{\partial x_2} \dot{x}_2 \\
    &=& \gamma x_1 -  \lambda(x_2) x_1^2 +  \lambda(x_2) x_2\\
    &\neq& A \Phi(x) \text{ for any square matrix } A,
\end{eqnarray*}
because the vector $\Phi(x)$ does not have any element with terms involving $x_1^2$, as we argued earlier that it can only linearly depend on $x_1.$ 
\end{example}

\begin{remark}
    Both these two examples are not feedback linearizable to a controllable system, but by lifting to a higher-dimensional state space, we can Koopman linearize Example \ref{ex:slow_manifold1}. 
\end{remark}

\begin{example}\label{ex:FB_Not_Koop}
The system given by
$$\left[\begin{matrix}\dot{x}_{1}\\ \dot{x}_2\end{matrix}\right] = \left[\begin{matrix}x_{1}^2 + x_2\\0\end{matrix}\right] + \left[\begin{matrix}0\\1\end{matrix}\right]u,$$ is feedback linearizable to a controllable system, if we apply the state transformation $z = \Phi(x) = [x_1, x_1^2 + x_2]$ and input tranformation $v = u + 2x_1(x_1^2 + x_2)$. The feedback linearized system is a double integrator $\dot{z}_1 = z_2$, $\dot{z}_2 = v.$
     \begin{gather*}
         \left\{g, \operatorname{ad}_f^1(g), \operatorname{ad}_f^2(g), \ldots\right\} \\ =
         \left\{\left[ \begin{array}{c}
     0  \\
     1 
\end{array}\right], \left[ \begin{array}{c}
     1  \\
     0 
\end{array}\right], \left[ \begin{array}{c}
     2x_1  \\
     0 
\end{array}\right],\ldots \right\},
 \end{gather*}
 has a rank of $2$ everywhere on $\mathbbm{R}^2.$ Thus, condition (i) in Theorem \ref{th:Neccesary} is satisfied. However, the condition (ii) is violated because $[\operatorname{ad}_f^2(g),\operatorname{ad}_f^1(g)] = \left[\begin{array}{c}
     2  \\
     0 
\end{array}\right]\neq \mathbf{0}.$  Thus, we can conclude from \ref{th:Neccesary} that this system is not Koopman linearizable. Just like in Example \ref{ex:slow_manifold2}, one can verify this claim independently, by proceeding with the assumption that a diffeomorphic transformation $\Phi(x)$ leads to a Koopman linear dynamics $\dot\Phi(x) = \frac{\partial \Phi}{\partial x_1}\cdot (x_1^2+x_2) +  \frac{\partial \Phi}{\partial x_2}\cdot u = A\Phi(x) + Bu$ for some matrices $A$ and $B$, and then arriving at a contradiction that $\Phi$ depends only on $x_2$.
\end{example}

\begin{example}
Consider the system
    \begin{equation*}
        \left[\begin{matrix}\dot{x}_{1}\\ \dot{x}_2\end{matrix}\right] =\left[\begin{matrix}1 - e^{- x_{1}}\\2 x_{1} - 2 x_{1} e^{- x_{1}}\end{matrix}\right] + \left[\begin{matrix}- e^{- x_{1}} & 0\\- 2 x_{1} e^{- x_{1}} - 1 & -1\end{matrix}\right]\left[\begin{matrix}u_1\\ u_2 \end{matrix}\right]
    \end{equation*}

    The diffeomorphic transformation $z = \Phi(x) = \left[\begin{array}{c}
     x_1^2 - x_2  \\
     1 - e^{x_1} 
\end{array}\right]$ evolves according to a linear dynamics of the form
\[
    \dot{z} = \left[\begin{array}{cc}
        0 & 0 \\
        0 & 1
    \end{array}\right]z + \left[\begin{array}{cc}
        1 & 1 \\
        1 & 0
    \end{array}\right]u, 
\]
which can be verified to be a controllable linear system, with the rank of the controllability matrix $[B, AB]$ equal to 2. Corollary \ref{cor:Controllable} provides necessary and sufficient conditions for the existence of such a transformation, which hold true for this system:  \\
\noindent\textbf{(i) Rank condition of Corollary \ref{cor:Controllable}}:
For $k=0$, we have $\operatorname{ad}_f^k(g_1)=g_1(x)=\left[ \begin{array}{c}
     - e^{- x_{1}}  \\
     - 2 x_{1} e^{- x_{1}} - 1 
\end{array}\right]$ and $\operatorname{ad}_f^k(g_2)=g_2(x)=\left[ \begin{array}{c}
     0  \\
     -1 
\end{array}\right].$ For $k>0$, we have
$$
\operatorname{ad}_f^k(g_1) = (-1)^{k+1}\left[ \begin{array}{c}
     e^{- x_{1}}  \\
     2 x_{1} e^{- x_{1}} 
\end{array}\right] \text{ and }
\operatorname{ad}_f^k(g_2) = \left[ \begin{array}{c}
     0  \\
     0 
\end{array}\right].
$$
Clearly, $\operatorname{dim}\left(\operatorname{span}\left\{\operatorname{ad}_f^j g_i(x), 1\le i \le m,\, j\ge 0\right\}\right) = 2$ for all $x\in\mathbbm{R}^2$ since $g_1(x)$ and $g_2(x)$ are linearly independent. 

\noindent\textbf{(ii) Commuting bracket condition of Corollary \ref{cor:Controllable}}: Notice that for all $k\ge 0$, we have $\nabla_x \operatorname{ad}_f^k g_1 = \left[\begin{array}{cc}
        * & 0 \\
        * & 0
    \end{array}\right]$ and thus $\left[\operatorname{ad}_f^k g_1, \operatorname{ad}_f^l g_2\right](x) = \nabla_x \operatorname{ad}_f^k g_1 \cdot \operatorname{ad}_f^l g_2 = \left[\begin{array}{cc}
        * & 0 \\
        * & 0
    \end{array}\right]\left[ \begin{array}{c}
     0  \\
     * 
\end{array}\right] = 0$ for $k,l \ge 0.$ Next, $\left[\operatorname{ad}_f^k g_2, \operatorname{ad}_f^l g_2\right](x) = 0$ for $k,l \ge 0$ since Lie bracket between constant vectors is zero. Finally, for $k,l > 0$, we have $\left[\operatorname{ad}_f^k g_1, \operatorname{ad}_f^l g_1\right](x) = 0$ since both vectors inside the Lie bracket are the same (up to a constant scalar multiplicative factor of $\pm 1$). When $k > 0$ and $l=0$, we have $\left[\operatorname{ad}_f^k g_1, \operatorname{ad}_f^l g_1\right](x) = \left[(-1)^k (g_1 - g_2), g_1\right](x) = (-1)^k\left[ g_1 , g_1\right](x) - (-1)^k\left[ g_2 , g_1\right](x) = 0 - 0 = 0.$ The case for $l > 0$ and $k=0$ is the same. Thus, $\left[\operatorname{ad}_f^k g_i, \operatorname{ad}_f^l g_j\right](x) = 0,$ for all $1\le i, j \le 2,$ and $ k, l \ge 0, \quad \forall x \in \mathbbm{R}^2$.

\end{example}

\section{Discussions and conclusion}\label{sec:conc}
Research on Koopman linear representation of nonlinear control systems has undergone rapid progress. Many important works have been conducted on learning approximate Koopman linearization and corresponding bounds on the residual errors. Serving as a precursor to Koopman learning, this work studies the conditions under which one can perform an exact Koopman linearization of control-affine nonlinear systems. The necessary conditions for Koopman linearization (presented in Theorem \ref{th:Neccesary}), are also sufficient for Koopman linearization restricted to a control-invariant set (Theorem \ref{th:Linear_lower}). The sufficient condition thus gives us a slightly weaker notion of Koopman linearization, and our future work will focus on tightening the sufficient condition. Transformation to controllable linear systems are studied next, with an additional rank condition which, along with the previous conditions, is necessary and sufficient for controllable Koopman linearization.

Considering that the differential geometric tools used for studying Koopman linearization in this paper produce conditions that, at a cursory glance, appear similar to feedback linearization conditions, we present Figure \ref{fig:Venn} to demarcate between the two types of linearization. In particular, Koopman linearizability does not necessarily imply feedback linearizability (Example \ref{ex:slow_manifold1}), and conversely, a feedback linearizability of a nonlinear control-affine system does not necessarily imply Koopman linearizability (Example \ref{ex:FB_Not_Koop}). An open question that remains to be answered is whether there exists a class of systems, which are feedback linearizable as well as can be linearized without input transformation purely via lifting.

Finally, we would like to highlight that our results impose fundamental restrictions on Koopman representation learning algorithms. However, since Koopman linearization is increasingly being adopted towards surrogate modeling of unknown systems via data-driven algorithms, being able to determine Koopman linearizability without explicit knowledge of the drift and control vectors (i.e. $f(x)$ and $g_i(x)$) is an important practical consideration. In our future work, we will explore the geometric characterization of Lie brackets in terms of an infinitesimal commutator (Theorem 1.33 in \cite{olver1993applications}) to determine Koopman linearizability of any given control-affine system directly from trajectory data.

\begin{figure}[t]
    \centering
\includegraphics[width=0.7\columnwidth,trim=0 15 0 5, clip]{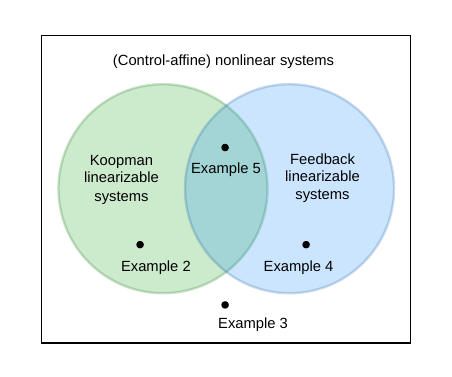}
    \caption{Relationship between Koopman linearizable systems and feedback linearizable systems.}
    \label{fig:Venn}
\end{figure}


\section*{References}
\bibliographystyle{ieeetr}
\bibliography{Ref,Feedback_linearization_refs,mendeley}

\end{document}